\newtheorem{theorem}{Theorem}[section]
\newtheorem{lemma}[theorem]{Lemma}
\theoremstyle{definition}
\newtheorem{definition}[theorem]{Definition}
\newtheorem{construction}[theorem]{Construction}
\theoremstyle{remark}
\newtheorem{remark}[theorem]{Remark}
\numberwithin{equation}{section}
\begin{document}

\title{Musing on Kunen's compact $L$-space}

\author[G.\ Plebanek]{Grzegorz Plebanek}
\address{Instytut Matematyczny, Uniwersytet Wroc\l awski}
\email{grzes@math.uni.wroc.pl}
\thanks{Partially supported by  the grant 2018/29/B/ST1/00223 from National Science Centre, Poland.}

\subjclass[2010]{28C15, 46B03, 54D05}





\newcommand{\con}{\mathfrak c}
\newcommand{\eps}{\varepsilon}
\newcommand{\alg}{\mathfrak A}
\newcommand{\algb}{\mathfrak B}
\newcommand{\algc}{\mathfrak C}
\newcommand{\ma}{\mathfrak M}
\newcommand{\pa}{\mathfrak P}
\newcommand{\BB}{\protect{\mathcal B}}
\newcommand{\AAA}{\mathcal A}
\newcommand{\CC}{{\mathcal C}}
\newcommand{\FF}{{\mathcal F}}
\newcommand{\GG}{{\mathcal G}}
\newcommand{\LL}{{\mathcal L}}
\newcommand{\NN}{{\mathcal N}}
\newcommand{\UU}{{\mathcal U}}
\newcommand{\VV}{{\mathcal V}}
\newcommand{\HH}{{\mathcal H}}
\newcommand{\DD}{{\mathcal D}}
\newcommand{\ZZ}{{\mathcal Z}}
\newcommand{\RR}{\protect{\mathcal R}}
\newcommand{\ide}{\mathcal N}
\newcommand{\btu}{\bigtriangleup}
\newcommand{\hra}{\hookrightarrow}
\newcommand{\ve}{\vee}
\newcommand{\we}{\cdot}
\newcommand{\de}{\protect{\rm{\; d}}}
\newcommand{\er}{\mathbb R}
\newcommand{\qu}{\mathbb Q}
\newcommand{\supp}{{\rm supp} }
\newcommand{\card}{{\rm card} }
\newcommand{\wn}{{\rm int} }
\newcommand{\wh}{\widehat }
\newcommand{\ult}{{\rm ULT}}
\newcommand{\vf}{\varphi}
\newcommand{\osc}{{\rm osc}}
\newcommand{\ol}{\overline}
\newcommand{\me}{\protect{\bf v}}
\newcommand{\ex}{\protect{\bf x}}
\newcommand{\stevo}{Todor\v{c}evi\'c}
\newcommand{\cc}{\protect{\mathfrak C}}
\newcommand{\scc}{\protect{\mathfrak C^*}}
\newcommand{\lra}{\longrightarrow}
\newcommand{\sm}{\setminus}
\newcommand{\uhr}{\upharpoonright}
\newcommand{\en}{\mathbb N}
\newcommand{\sub}{\subseteq}
\newcommand{\ms}{$(M^*)$}
\newcommand{\m}{$(M)$}
\newcommand{\MA}{MA$(\omega_1)$}
\newcommand{\clop}{\protect{\rm Clop} }
\newcommand{\cf}{\protect{\rm cf} }

\begin{abstract}
We present a connected version of the compact $L$-space
constructed by Kenneth Kunen under CH.
We show that this provides a Corson compact space $K$ such that the Banach space $C(K)$ is
 isomorphic to no space of continuous function on a zero-dimensional compactum.
\end{abstract}

\maketitle

\section{Introduction}\label{intro}

Assuming the continuum hypothesis, Kunen \cite{Ku81} presented a construction of  a compact $L$-space --- a nonseparable space which is hereditarily Lindel\"of.
A compactified Suslin line  is also an $L$-space but its existence requires axioms stronger than CH.
Besides, Kunen's compactum has very interesting measure-theoretic properties which are not possible on linearly ordered spaces.

Saying that $\mu$ is a measure on a compact space $K$ we always mean that $\mu$ is a finite Borel measure on $K$ that is
inner-regular.
The basic idea that was behind Kunen's construction was that a compact space $K$ is an $L$-space whenever
it carries a probability measure $\mu$ such that

\begin{enumerate}[\bf M.1]
\item $\mu$ vanishes on singletons;
\item $\mu$ is strictly positive;
\item $\mu(B)=0$ for every Borel set $B$ having empty interior;
\item every Borel set of measure zero set is metrizable.
\end{enumerate}

Note  that M.3 implies that $\mu(B)=\mu(\overline{B})$ for every Borel set $B$, and
we can check the required properties of $K$ as follows.

The space $K$ is not separable: For every countable $A\sub K$ we have $\mu(A)=0$ by M.1; hence,  $\mu(\overline{A})=0$ so $\overline{A}\neq K$.

To see that $K$ is hereditarily Lindel\"{o}f consider any family $\UU$ of open subsets of $K$.
Writing $U=\bigcup\UU$, we first take, using inner-regularity of $\mu$, a countable subfamily $\VV\sub\UU$ so that its union $V=\bigcup\VV$ satisfies $\mu(V)=\mu(U)$.
Hence $\mu(U\sm V)=0$ and $\mu(\overline{U\sm V})=0$. Consequently, by M.4,  $U\sm V$ is a subset of a compact metrizable space $\overline{U\sm V}$
so $U\sm V$ is covered by a countable subfamily $\VV_1\sub \UU$. This means that
$\VV\cup\VV_1$ forms  a countable subcover of $U$.

Around the same time, Haydon \cite{Ha78} and Talagrand \cite{Ta80}
 presented their constructions of compact spaces with measures carried out for different purposes.
 Those constructions, however,   shared some features of
the one from \cite{Ku81}. All the  three examples were later amalgamated by Negrepontis to the Kunen-Haydon-Talagrand example
solving a number of problems from topology and functional analysis, see \cite[section 5]{Ne84}.

\begin{definition}\label{nm}
A (regular Borel) measure $\mu$ on a compact space $K$ is {\em normal} if $\mu(B)=0$ for every Borel set $B$ with empty interior.
\end{definition}

With such a definition we can briefly say that Kunen constructed a dense-in-itself compact space  supporting a normal measure
such that  all the measure zero sets are metrizable.
Definition \ref{nm} follows the tradition in functional analysis  originated in   Dixmier's paper published  in 1951.
 Recall
that a normal measure on a compact space $K$ defines a so-called normal functional on the Banach lattice $C(K)$,
one which is order continuous, see \cite[section 4.7]{DDLS} for details.
Normal measures are sometimes called hyperdiffuse, see e.g.\ the survey paper by Flachsmeyer and Lotz \cite{FL80}.
We should warn the reader that Dixmier's notion of a normal measure has little to do with the same term used widely in set theory, in the context of real-valued measurable cardinals.

Kunen's compactum mentioned above  is defined as an inverse system of Cantor sets so it is zero-dimensional.
Recall that a typical example of a normal measure is the natural measure defined on the Stone space of the measure algebra $\alg$ of the Lebesgue measure $\lambda$ on $[0,1]$. Since the algebra $\alg$ is complete, its Stone space is extremally disconnected.

A couple of years ago, Garth Dales brought our attention to an old question, if
a compact connected space can carry a normal probability measure, see  \cite{FP64} and \cite{FL80}.
The problem was partially motivated by a result of Fishel and Papert \cite{FP64} who proved that
a compact space carries no nonzero normal probability measure whenever $K$ is {\em locally} connected.
Answering the question, we proved that there is a compact connected space of weight $\con$ admitting a normal probability measure,
using some Kunen-like inductive construction of length $\con$ carried out in ZFC. The preprint
\cite{Pl15} remained unpublished but the result was included  in the monograph \cite{DDLS} (as Theorem 4.7.24).

Kunen's ideas from \cite{Ku81} were further modified and developed in  a number of articles, see e.g.\
D\v{z}amonja and Kunen \cite{DK93,DK95}, Kunen and  van Mill \cite{KM95}
Brandsma and  van Mill \cite{BM98, BM00}, Kunen \cite{Ku08}, Borodulin-Nadzieja and  Plebanek
\cite{BNP16}, Dales and Plebanek \cite{DP19}.
In the present note we come back to the method of \cite{Ku81} once again to discuss the following result
(see the next section for all the unexplained terminology and notation)

\begin{theorem}\label{main}
Assuming $\cf(\NN)=\omega_1$, there is a compact space $K$ such that

\begin{enumerate}[(i)]
\item $K$ is connected;
\item $K$ is the support of  a normal  probability measure $\mu$;
\item every Borel set $B\sub K$ which is  $\mu$-null is metrizable;
\item $K$ is a Corson compact $L$-space.
\end{enumerate}

Moreover, one can assure that the measure $\mu$ in question is either of countable Maharam type or of type $\omega_1$.
\end{theorem}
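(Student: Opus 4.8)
The plan is to mimic Kunen's inductive scheme, but with connected metrizable building blocks and with the bookkeeping regulated by a cofinal family in the null ideal. Fix a base probability space $([0,1],\lambda)$ (or $(2^\omega,\lambda)$ for the countable-type version) and work inside the $\Sigma$-product
\[
\Sigma=\{x\in[0,1]^{\omega_1}:\ \{\alpha<\omega_1:x(\alpha)\neq 0\}\ \text{is countable}\}.
\]
I would construct, by transfinite recursion of length $\omega_1$, a continuous increasing system of metrizable \emph{connected} compacta $K_\alpha\sub[0,1]^{\alpha}$ together with probability measures $\mu_\alpha$ compatible with the coordinate projections $\pi^\beta_\alpha\colon K_\beta\to K_\alpha$ (that is, $(\pi^\beta_\alpha)_*\mu_\beta=\mu_\alpha$), taking $K=\varprojlim_{\alpha<\omega_1}K_\alpha\sub\Sigma$ and letting $\mu$ be the unique Radon measure on $K$ with $(\pi_\alpha)_*\mu=\mu_\alpha$ for all $\alpha$; such a limit measure exists by the standard consistency theorem for inverse systems of compact probability spaces. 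Since every $\alpha<\omega_1$ is countable, each level $K_\alpha$ is metrizable, and the whole construction reduces to specifying, at successor and limit stages, how $K_{\alpha+1}$ sits over $K_\alpha$ and how $\mu_{\alpha+1}$ refines $\mu_\alpha$.

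Several of the required properties come almost for free from this skeleton. Since each $K_\alpha$ is connected and the bonding maps are continuous surjections, the inverse limit $K$ is connected, giving (i). Because every point of $K\sub\Sigma$ has countable support, $K$ is Corson compact, which is the topological half of (iv); the remaining half, that $K$ is an $L$-space, will follow from the verification of M.1--M.4 recorded in the introduction. The seed measure $\lambda$ is nonatomic, so keeping each $\mu_\alpha$ equivalent to Lebesgue on $K_\alpha$ makes $\mu$ vanish on points (M.1) and have full support (M.2), the latter being exactly the statement that $K=\supp\mu$ in (ii).

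The heart of the matter, and the step I expect to be the main obstacle, is forcing the limit measure $\mu$ to be \emph{normal} (M.3) while staying in the connected, Corson world. For a closed $F\sub K$ one has $\mu(F)=\lim_\alpha\mu_\alpha(\pi_\alpha[F])$, and $F$ has empty interior precisely when, at every level, its projection fails to swallow a whole fibre of $\pi_\alpha$; the task is to guarantee that such projections shrink to measure zero. In the zero-dimensional original this is arranged by splitting clopen sets into independent halves, but here I must instead glue a connected fibre (an interval) over $K_\alpha$ at each successor step, calibrated so that any closed set failing to contain a whole fibre loses a fixed proportion of its measure. Iterating this amalgamation across all $\omega_1$ stages, with the order in which fibres are added dictated by a fixed cofinal family $\{Z_\alpha:\alpha<\omega_1\}$ in the null ideal $\NN$ (available precisely because $\cf(\NN)=\omega_1$), should drive $\mu(F)$ to $0$ for every empty-interior closed $F$, hence for every empty-interior Borel set. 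Making the connected amalgamation compatible with normality --- and with the preservation of the $\Sigma$-product structure --- is the delicate point; this is where the template of the ZFC construction from \cite{Pl15} (see \cite[Theorem 4.7.24]{DDLS}) has to be strengthened.

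Finally, metrizability of the $\mu$-null sets, that is, property (iii) (condition M.4), is where $\cf(\NN)=\omega_1$ is used a second time: I would arrange the cofinal family so that each $Z_\alpha$ is realised as a closed $\mu$-null set $N_\alpha\sub K$ depending on only countably many coordinates, hence metrizable, and so that every $\mu$-null Borel set is contained in some $N_\alpha$; a subspace of a metrizable compactum is metrizable, giving (iii). With M.1--M.4 in hand, the computation in the introduction shows $K$ is a nonseparable hereditarily Lindel\"of space, completing (iv). The Maharam-type addendum is obtained by choosing the seed: running the construction over $(2^\omega,\lambda)$ and keeping every $\mu_\alpha$ of countable type yields a $\mu$ of countable Maharam type, whereas seeding with a measure of type $\omega_1$ (e.g.\ on $2^{\omega_1}$) and preserving homogeneity yields $\mu$ of type $\omega_1$.
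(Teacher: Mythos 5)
Your skeleton --- an $\omega_1$-length inverse limit of connected metrizable compacta with compatible measures, realised inside a $\Sigma$-product --- is indeed the paper's framework, but the steps you yourself flag as delicate are exactly where the proposal is missing the ideas that make the proof work. For normality the paper does not try to make empty-interior closed sets ``lose a fixed proportion of measure'' at each stage; it proves the contrapositive using a second $\omega_1$-family, supplied by the Cicho\'n--Kamburelis--Pawlikowski theorem (Theorem \ref{ckp}): under $\cf(\NN)=\omega_1$ each level $K_\alpha$ carries a family $\{F^\alpha_\xi:\xi<\omega_1\}$ of closed sets of positive measure such that \emph{every} Borel set of positive measure contains one of them. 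The bookkeeping then guarantees that the preimage of each $F^\beta_\xi$ acquires nonempty interior at some later successor step, by gluing the interval fibre precisely over a positive-measure closed subset of that preimage (Lemma \ref{basic}(i)); since every zero set of $K$ has the form $\pi_\beta^{-1}[Z']$ for some countable $\beta$ (Lemma \ref{2:is2}), and the ccc reduction of Lemma \ref{1:1} passes from zero sets to arbitrary closed sets, this yields M.3. Your cofinal family in the null ideal cannot play this role --- it lives on the wrong side of the measure --- so as written the hypothesis $\cf(\NN)=\omega_1$ is only half used and the normality argument has no engine.

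For (iii) you assert that a closed $\mu$-null set ``depending on only countably many coordinates'' is metrizable; this is false --- such a set is a full preimage $\pi_\beta^{-1}[Z']$ and a priori as nonmetrizable as $K$ itself. What the construction actually does is choose, at each successor step, the set over which the new fibre is attached to be disjoint from the preimages of all null sets $N^\beta_\xi$ enumerated so far; consequently $\pi_\alpha$ is injective on $\pi_\beta^{-1}[Z']$ for $Z'\sub N^\beta_\xi$ once $\alpha\ge\phi(\beta,\xi)$, and the set is homeomorphic to a metrizable compactum. The same avoidance is what places $K$ inside the $\Sigma$-product (every singleton is null, so every point eventually stops acquiring nonzero coordinates); you cannot simply decree $K\sub\Sigma$ in advance. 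Finally, the Maharam-type dichotomy does not come from the seed: starting from $2^\omega$ or $2^{\omega_1}$ destroys connectedness (and, in the latter case, metrizability of the levels, on which the whole argument depends). It comes from the two measures of Lemma \ref{basic}: taking the product with Lebesgue measure on each new interval fibre ($\nu_1$) adds a stochastically independent coordinate at each of the $\omega_1$ successor steps and forces type $\omega_1$, while concentrating the new measure on the graph of a function ($\nu_2$) keeps the measure algebra equal to that of Lebesgue measure and gives countable type.
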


 The fact that the assumption of CH can be relaxed to
the present one, on the cofinality of the null ideal, was already noted by Kunen and van Mill \cite[Theorem 1.2]{KM95}.
It is worth recalling that hardly any feature of the space $K$ as in the theorem above is possible in the usual set theory.
Indeed, under Martin's axiom and the negation of CH

\begin{enumerate}[--]
\item there are no compact $L$-spaces (Juh\'asz \cite{Ju80});
\item  every measure on a Corson compact space has a metrizable support,
(see \cite{Ne84} and \cite{KM95});
\item no first-countable compact space admits a normal probability measure
(Zindulka \cite{Zi00}).
\item every measure on a first-countable compact space is of type $\omega$ (Plebanek \cite{Pl95}).
\end{enumerate}

The fact that, depending on the shape of
successor steps of the construction, the measure $\mu$ as in \ref{main} can be either of type $\omega$ or $\omega_1$ was
already  noted in \cite{Ku81}. The additional property that the resulting space is connected is the main point here.
As we shall see it comes as a result of  a rather straightforward modification of
Kunen's argument.
However, we give below a self-contained proof of Theorem \ref{main} because
we later analyse further properties of the  space $K$ resulting from our construction and show that $K$ gives rise to an interesting Banach space $C(K)$; see the final section.

 I wish to thank H.\ Garth Dales for our  discussion concerning the
 role of normal measures in Banach space theory.

\section{Preliminaries}

\subsection{Compact spaces and measures}
As we have already mentioned, by a measure $\mu$  on a compact space $K$ we always mean
a finite Radon measure, that is
a measure defined on the Borel $\sigma$-algebra $Bor(K)$ which satisfies, for every $B\in Bor(K)$,
the regularity condition
 \[ \mu(B)=\sup\{\mu(F): \overline{F}=F\sub B\}.\]
We say that the measure $\mu$ is strictly positive on $K$ or that $K$ is the support of $\mu$ if $\mu(U)>0$ for every
nonempty open set $U\sub K$.

Recall that if $\mu$ is a measure on $K$ (vanishing on points) then the Maharam type of $\mu$ can be defined
as the density character of the corresponding Banach space $L_1(\mu)$ or, equivalently,
the density character of the underlying measure algebra with respect to the Fr\'echet-Nikodym metric.
Thus $\mu$ is of type $\omega$ if there is a countable family $\CC\sub Bor(K)$ such that
\[ \inf \{\mu(B\btu C): C\in\CC\}=0,\]
for every $B\in Bor(K)$.

Let $\lambda$ be the Lebesgue measure on $[0,1]$ and let
 $\NN$ denote the $\sigma$-ideal of Lebesgue null sets.
 Then  $\cf(\NN)$ stands for its cofinality, so
$\cf(\NN)=\omega_1$ amounts to saying that there is a family $\{N_\xi:\xi<\omega_1\}\sub\NN$ such that
for every $N\in\NN$ there is $\xi<\omega_1$ with $N\sub N_\xi$.
Recall that basic  cardinal invariants of Radon measures
are determined by their corresponding measure algebras. In particular,
if $\mu$ is a measure on a metrizable compact space $K$ and $\mu$ vanishes on singletons then the cofinality of its null ideal is
equal to $\cf(\NN)$, see Fremlin \cite{Fr89}.

We shall use the following consequence of a result due to Cicho\'n, Kamburelis and Pawlikowski \cite{CKP85}
(see also \cite[Proposition 6.9]{Fr89}).

\begin{theorem}\label{ckp}
If $\cf(\NN)=\omega_1$ then for every measure $\mu$ on a compact metrizable space $K$ there
is a family $\{F_\xi:\xi<\omega_1\}$ of closed nonnegligible sets such that  every $B\in Bor(K)$ with $\mu(B)>0$
contains $F_\xi$ for some $\xi<\omega$.
\end{theorem}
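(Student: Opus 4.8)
The plan is to reduce the assertion to Lebesgue measure $\lambda$ on the Cantor set $2^\omega$, where it is essentially the content of the cited results, and then to transport the resulting family back to $K$. First I would split $\mu$ into its atomic and atomless parts. The atomic part is handled directly: the at most countably many atoms $\{a_n:n\in\omega\}$ are themselves closed nonnegligible sets, and any Borel $B$ carrying positive atomic mass contains one of them. Hence it is enough to produce the family for the atomless part, and I may assume from now on that $\mu$ is atomless; after normalising it is then an atomless probability measure of countable Maharam type, because $K$ is metrizable.

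By Maharam's theorem together with the isomorphism theorem for standard probability spaces, I would fix a measure-preserving Borel bijection $\theta\colon X\to Y$ between conull Borel sets $X\sub 2^\omega$ and $Y\sub K$ that carries $\lambda$ to $\mu$. The combinatorial heart, imported from \cite{CKP85} (cf.\ \cite[Prop.\ 6.9]{Fr89}), is that $\cf(\NN)=\omega_1$ yields a family $\{G_\xi:\xi<\omega_1\}$ of closed $\lambda$-positive subsets of $2^\omega$ such that every Borel set of positive measure includes some $G_\xi$; this is the only place where the hypothesis $\cf(\NN)=\omega_1$ enters, through a cofinal $\omega_1$-chain in $\NN$ and the Tukey analysis of the null ideal. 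I would then set $F_\xi$ to be any closed nonnegligible subset of the Borel set $\theta(G_\xi)\sub K$, chosen once and for all; together with the countably many singletons $\{a_n\}$ these form the desired family of size $\omega_1$.

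To verify the base-from-below property, take any Borel $B\sub K$ with $\mu(B)>0$ and not meeting the atoms. Then $\theta^{-1}(B\cap Y)$ is $\lambda$-positive, so by inner regularity it contains a closed positive set $C\sub 2^\omega$, and by the imported property some $G_\xi\sub C$. The decisive move is to pass through $2^\omega$ and then push \emph{forward}: since $G_\xi\sub C\sub\theta^{-1}(B\cap Y)\sub X$ and $\theta$ is a genuine bijection on $X$, the image $\theta(G_\xi)$ is actually contained in $B\cap Y\sub B$, whence $F_\xi\sub\theta(G_\xi)\sub B$ with true topological inclusion. The main obstacle is precisely this last point—preserving \emph{actual} inclusion rather than inclusion modulo null sets—because $\theta$ is only an isomorphism \emph{mod null}; I resolve it by doing the inner-regular approximation on the Cantor side and transporting closed sets forward along $\theta$, so that the unavoidable null slack is absorbed harmlessly into the conull set $Y$. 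Apart from importing the Lebesgue case, this transfer is the only real work.
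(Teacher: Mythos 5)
The paper offers no proof of this statement at all --- it is quoted verbatim as a consequence of Cicho\'n--Kamburelis--Pawlikowski and Fremlin's Proposition~6.9 --- so there is no in-paper argument to compare against; your write-up is a correct way of fleshing out that citation, importing the Cantor-set case as the black box (exactly what the paper imports) and adding the transfer to a general compact metrizable $K$. The transfer itself is sound: the atoms are dispatched as singletons, and for the atomless part the one delicate point, getting genuine rather than mod-null inclusions, is handled correctly by doing the inner-regular approximation on the Cantor side, where $G_\xi\subseteq\theta^{-1}[B\cap Y]\subseteq X$ forces $\theta[G_\xi]\subseteq B$ and hence $F_\xi\subseteq B$. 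Two cosmetic remarks: when you \emph{define} $F_\xi$ you should read $\theta[G_\xi]$ as $\theta[G_\xi\cap X]$, since a given $G_\xi$ need not lie inside the conull set $X$ (this does not affect the verification, where $G_\xi\subseteq X$ automatically); and one should note that $\theta[G_\xi\cap X]$ is measurable of positive measure (it is Borel by Lusin--Suslin, or at worst analytic hence universally measurable), so that inner regularity applies to extract the closed set $F_\xi$.
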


A zero set in a topological space $X$ is one of the form $f^{-1}[\{0\}]$ where
the function $f:X\to \er$ is continuous.
It is easy to check that a subset of a compact space is a zero set if and only if it is closed and $G_\delta$.
We write $\ZZ(X)$ for the family of all closed $G_\delta$ subsets of $X$.
Note that if a compact space $K $ is an $L$-space then it is perfectly normal, i.e.\
every  closed subset $F$ of it is $G_\delta$ so $F\in\ZZ(K)$.

\begin{lemma}\label{1:1}
Let $K$ be a compact space, and suppose that $\mu$ is a strictly positive measure on $K$ such that
$\mu(Z)=0$ for every $Z\in\ZZ(K)$ with empty interior. Then $\mu$ is a normal measure.
\end{lemma}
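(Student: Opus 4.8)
The plan is to reduce the assertion to closed sets, and then, given a closed set with empty interior, to trap it inside a \emph{zero} set that still has empty interior, so that the hypothesis applies verbatim.

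First I would reduce to the closed case. Let $B\in Bor(K)$ have empty interior. By inner regularity $\mu(B)=\sup\{\mu(F):F\sub B,\ F \text{ closed}\}$, and every closed $F\sub B$ again has empty interior, since $\wn F\sub\wn B=\emptyset$. Hence it suffices to show that $\mu(F)=0$ for every closed $F\sub K$ with $\wn F=\emptyset$. Fix such an $F$ and put $U=K\sm F$, a dense open set. The goal is now to produce a zero set $Z$ with $F\sub Z$ and $\wn Z=\emptyset$: once this is done the hypothesis gives $\mu(Z)=0$, whence $\mu(F)\le\mu(Z)=0$. Passing to complements, finding such a $Z$ is the same as finding a \emph{dense cozero} set $W\sub U$ (that is, $W$ open with $K\sm W\in\ZZ(K)$), because then $Z=K\sm W$ is a zero set containing $F$ with $\wn Z=K\sm\ol W=\emptyset$.

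The construction of $W$ is where strict positivity is used. Since $\mu$ is a strictly positive finite measure, $K$ satisfies the countable chain condition: any disjoint family of nonempty open sets is countable, as the sets have positive $\mu$-measure with total mass at most $\mu(K)$. I would then choose, by Zorn's lemma, a family of pairwise disjoint nonempty cozero subsets of $U$ that is maximal with respect to inclusion; by the ccc it is countable, say $\{W_n\}_n$, and I set $W=\bigcup_n W_n$. A countable union of cozero sets is a cozero set (if $W_n=\{g_n>0\}$ with $0\le g_n\le 1$ continuous, then $W=\{\sum_n 2^{-n}g_n>0\}$), so $W$ is a cozero set contained in $U$. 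Finally $W$ is dense: otherwise $O=K\sm\ol W$ would be nonempty open, and as $U$ is dense, $O\cap U$ would be a nonempty open set, which in the compact (hence completely regular) space $K$ contains a nonempty cozero set $W'$; but $W'\sub U$ is disjoint from $W$, contradicting maximality.

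The one point that genuinely needs care --- and the reason strict positivity cannot be dropped --- is that a dense open set disjoint from $F$ is available for trivial reasons, whereas the hypothesis only controls \emph{zero} sets, so I must arrange that the complement of $W$ is a zero set, equivalently that $W$ is cozero. This is precisely what forces the maximal family above to be countable, and the ccc coming from strict positivity delivers exactly that countability. With $W$ as constructed, $Z=K\sm W$ is the desired zero set with empty interior containing $F$, and the lemma follows.
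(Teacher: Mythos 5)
Your proposal is correct and follows essentially the same route as the paper: reduce to closed sets with empty interior, then trap such a set in a zero set with empty interior by taking a maximal pairwise disjoint family of cozero sets in the complement (equivalently, the paper's maximal family of continuous functions vanishing on $F$ with pairwise disjoint supports), using the ccc from strict positivity to get countability and summing $2^{-n}g_n$ to produce the single zero set. The only cosmetic difference is that you make the inner-regularity reduction and the density argument explicit, which the paper leaves implicit.
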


\begin{proof}
 The assertion follows from the following observation.
\medskip

{\sc Claim.} Every closed set $F\sub K$ with empty interior is contained in some $Z\in\ZZ(K)$ with empty interior.
\medskip

To verify the claim, consider a maximal family $\FF$ of continuous functions $K\to [0,1]$ such that $f|F=0$ for $f\in\FF$ and $f\cdot g=0$ whenever $f,g\in\FF$, $f\neq g$.
Then $\FF$ is necessarily countable because $K$, being the support of a measure, satisfies the countable chain condition. Write $\FF=\{f_n: n\in\en\}$ and let
$f=\sum_n 2^{-n}f_n$ and  $Z=f^{-1}[0]$. Then the function $f$ is continuous so  $Z\in\ZZ(K)$. We have $Z\supseteq F$ and the interior of $Z$ must be empty by the maximality of $\FF$.
\end{proof}

If $f:K\to L$ is a continuous map and $\mu$ is a measure on $K$ then the image measure $f[\mu]$ on $L$  is defined by
\[ f[\mu](B)=\mu(f^{-1}[B]) \mbox{  for every Borel set } B\sub L.\]

\subsection{Inverse systems}\label{is}
We shall consider inverse systems of compact (metrizable) spaces with probability measures of the form
\[\langle K_\alpha,\mu_\alpha, \pi^\alpha_\beta: \beta<\alpha<\omega_1\rangle,\]
where   for all $\gamma <\beta<\alpha<\omega_1$

\begin{enumerate}[\bf \ref{is}(a)] \label{1:2}
\item $K_\alpha$ is a compact space and $\mu_\alpha$  is a probability measure on  $ K_\alpha$;
\item $\pi^\alpha_\beta:K_\alpha\to K_\beta$ is a continuous surjection;
\item  $ \pi^\beta_\gamma  \circ \pi^\alpha_\beta =\pi^\alpha_\gamma$;
\item $\pi^\alpha_\beta[\mu_\alpha]=\mu_\beta$.
\end{enumerate}

The following summarises basic facts on such inverse systems, see  \cite[Proposition 4.1.15]{DDLS} or \cite[0.33]{Ne84}.

\begin{theorem} \label{2:is}
Let $K$ be the limit of the system satisfying \ref{is}(a)--(d) with uniquely defined continuous surjections $\pi_\alpha:K\to K_\alpha$ for
$\alpha<\omega_1$.

\begin{enumerate}[(a)]
\item $K$ is a compact space and $K$ is connected whenever  all the space $K_\alpha$ are connected.
\item There is the unique probability measure $\mu$ on $K$ such that $\pi_\alpha[\mu]=\mu_\alpha$ for $\alpha<\omega_1$.
\item If every $\mu_\alpha$ is strictly positive then $\mu$ is strictly positive.
\end{enumerate}
\end{theorem}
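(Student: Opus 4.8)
The plan is to establish the three assertions about the inverse limit $K$ in order, relying on the standard machinery for inverse systems of compact spaces together with the compatibility conditions \ref{is}(a)--(d). Throughout I write $\pi_\alpha:K\to K_\alpha$ for the limit projections, recalling that $K$ sits inside the product $\prod_\alpha K_\alpha$ as the set of threads $(x_\alpha)$ satisfying $\pi^\alpha_\beta(x_\alpha)=x_\beta$ for all $\beta<\alpha$.

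For part (a), the compactness of $K$ is immediate: $K$ is a closed subspace of the compact product $\prod_{\alpha<\omega_1}K_\alpha$, being the intersection over all pairs $\beta<\alpha$ of the closed sets $\{x:\pi^\alpha_\beta(x_\alpha)=x_\beta\}$, and hence compact. For connectedness, I would argue that each finite approximation is connected and pass to the limit. Since the index set $\omega_1$ is a directed order, $K$ is the inverse limit of the connected spaces $K_\alpha$ along surjective bonding maps; the standard fact that an inverse limit of connected compact Hausdorff spaces over a directed set is connected then applies. Concretely, if $K=U\cup V$ were a partition into disjoint nonempty open sets, compactness would let me find a single index $\alpha$ and open sets in $K_\alpha$ whose preimages separate $K$, contradicting the connectedness of $K_\alpha$ together with surjectivity of $\pi_\alpha$.

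For parts (b) and (c), the existence of $\mu$ is the heart of the matter and rests on a projective-limit construction of measures. The family $\{\mu_\alpha\}$ is consistent by \ref{is}(d), so the cylinder functional $\nu$ defined on the algebra generated by the sets $\pi_\alpha^{-1}[B]$, $B\in Bor(K_\alpha)$, by $\nu(\pi_\alpha^{-1}[B])=\mu_\alpha(B)$ is well defined and finitely additive. Invoking the cited Theorem (\cite[Proposition 4.1.15]{DDLS} or \cite[0.33]{Ne84}) gives a genuine Radon measure $\mu$ on $K$ extending $\nu$ with $\pi_\alpha[\mu]=\mu_\alpha$; uniqueness follows because the cylinder sets generate $Bor(K)$ and a Radon measure is determined by its values there. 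For strict positivity in (c), let $U\sub K$ be nonempty open. By definition of the product topology and compactness I can find an index $\alpha$ and a nonempty open $W\sub K_\alpha$ with $\pi_\alpha^{-1}[W]\sub U$; then $\mu(U)\geq\mu(\pi_\alpha^{-1}[W])=\mu_\alpha(W)>0$ using the strict positivity of $\mu_\alpha$.

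The step I expect to be the genuine obstacle is the countable-additivity (equivalently, the Radon extension) in part (b): passing from the finitely additive cylinder functional to an honest measure is exactly where the structure of the inverse system matters, and over an uncountable index set like $\omega_1$ one must be careful that the relevant projections remain surjective and that inner regularity is preserved in the limit. I would lean entirely on the quoted reference for this, since reconstructing Kolmogorov-type consistency over $\omega_1$ with Radon (rather than merely Borel) measures involves checking tightness of the limit measure, which is precisely the content packaged in \cite[Proposition 4.1.15]{DDLS}.
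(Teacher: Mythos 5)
The paper offers no proof of this theorem at all — it is stated as a summary of standard facts with a pointer to \cite[Proposition 4.1.15]{DDLS} and \cite[0.33]{Ne84}. Your sketch is correct and entirely standard, and you rightly identify the passage from the consistent family $(\mu_\alpha)$ to a genuine Radon measure on the limit as the one step to outsource to those same references, so in effect you are taking the same route as the paper.
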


We also need  the following standard fact.

\begin{lemma}\label{2:is2}
Let $K$ be the limit of the system satisfying \ref{is}(a)--(c).
Then every continuous function $f:K\to\er$ can be written as $f=g\circ \pi_\alpha$ for some $\alpha<\omega_1$
and a continuous function $g:K_\alpha\to\er$.

Consequently, for every zero set $Z\sub K$
 there are $\alpha<\omega_1$ and $Z'\in\ZZ(K_\alpha)$ such that
$Z=\pi_\alpha^{-1}[Z']$.
\end{lemma}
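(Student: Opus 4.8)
The plan is to recognise the functions factoring through the finite stages as a point-separating subalgebra of $C(K)$, apply the Stone--Weierstrass theorem, and then use the regularity of $\omega_1$ to collapse a countable approximating sequence to a single stage. First I would set
\[ \AAA=\{g\circ\pi_\alpha:\alpha<\omega_1,\ g\in C(K_\alpha)\}\sub C(K) \]
and verify that $\AAA$ is a subalgebra of $C(K)$ that contains the constants and separates points. Given $g_i\circ\pi_{\alpha_i}\in\AAA$ ($i=1,2$), put $\alpha=\max(\alpha_1,\alpha_2)$; since the limit projections satisfy $\pi^\alpha_{\alpha_i}\circ\pi_\alpha=\pi_{\alpha_i}$, each $g_i\circ\pi_{\alpha_i}$ equals $(g_i\circ\pi^\alpha_{\alpha_i})\circ\pi_\alpha$, so both are of the form $\tilde g\circ\pi_\alpha$ with $\tilde g\in C(K_\alpha)$; hence sums and products of elements of $\AAA$ again lie in $\AAA$, and constants obviously do. For separation, distinct points $x\neq y$ of $K$ are distinct threads, so $\pi_\alpha(x)\neq\pi_\alpha(y)$ for some $\alpha$, and some $g\in C(K_\alpha)$ separates these two points of the metrizable compactum $K_\alpha$. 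As $K$ is compact Hausdorff, Stone--Weierstrass then gives that $\AAA$ is dense in $C(K)$.

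Next I would fix $f\in C(K)$ and choose $h_n=g_n\circ\pi_{\alpha_n}\in\AAA$ with $h_n\to f$ uniformly. The index set $\{\alpha_n:n\in\en\}$ is a countable subset of $\omega_1$, hence bounded; let $\alpha<\omega_1$ be an upper bound. As above, each $h_n$ may be rewritten as $\tilde g_n\circ\pi_\alpha$ with $\tilde g_n\in C(K_\alpha)$. Because $\pi_\alpha$ maps $K$ onto $K_\alpha$, we have $\sup_{K_\alpha}|\tilde g_n-\tilde g_m|=\sup_K|h_n-h_m|$, so $(\tilde g_n)$ is uniformly Cauchy on $K_\alpha$ and converges to some $g\in C(K_\alpha)$. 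Passing to the limit gives $f=g\circ\pi_\alpha$, which is the first assertion.

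Finally, for the consequence, a zero set $Z\sub K$ has the form $Z=f^{-1}[\{0\}]$ for some $f\in C(K)$; writing $f=g\circ\pi_\alpha$ as above yields $Z=\pi_\alpha^{-1}[g^{-1}[\{0\}]]$, and $Z'=g^{-1}[\{0\}]\in\ZZ(K_\alpha)$ is the required zero set. The one genuinely essential ingredient is the boundedness of the countable set $\{\alpha_n:n\in\en\}$, i.e.\ the uncountable cofinality of $\omega_1$; this is precisely the step that would fail for an inverse limit taken over a chain of countable cofinality, while everything else is routine.
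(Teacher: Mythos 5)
Your proposal is correct and follows the same route as the paper, which simply cites the Stone--Weierstrass theorem for the first assertion and notes that the second follows; you have supplied the standard details (the point-separating subalgebra of functions factoring through the stages, the use of the uncountable cofinality of $\omega_1$ to bound the countable set of indices, and the descent of a uniformly Cauchy sequence to a single $K_\alpha$). Nothing is missing.
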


\begin{proof}
The first statement is an immediate consequence of the Stone-Weierstrass theorem;  the second statement clearly follows.
\end{proof}

\subsection{Corson compact spaces}
A compact space $K$ is {\em Corson compact} if, for some $\kappa$,  it can be embedded into the space
$\Sigma(\er^\kappa)$ of those $x\in\er^\kappa$ which have only countably many nonzero coordinates.
The class of Corson compacta is tightly connected to several aspects of Banach space theory, see \cite{Ne84}.

\section{A connected  $L$-space}

The following lemma describes the essential part of the inductive construction leading to \ref{main}.

\begin{lemma}\label{basic}
Let $K$ be a metrizable compact connected space, and let $\mu$ be a strictly positive measure on $K$.
If $F\sub K$ is a closed set with $\mu(F)>0$, then there
are a compact connected space $\wh{K}\sub K\times [0,1]$ and two strictly positive measures $\nu_1,\nu_2$ on
$\wh{K}$ such that, writing $\pi :\wh{K}\to K$ for the projection, the following are satisfied

\begin{enumerate}[(i)]
\item $\wn((\pi^{-1}[F])\neq\emptyset$;
\item $\pi[\nu_1]=\pi[\nu_2]=\mu$;
\item there is $B\in Bor(\wh{K})$ such that $\inf\{\nu_1(B\btu \pi^{-1}[A]): A\in Bor(K)\}>0$;
\item $\inf\{\nu_2(B\btu \pi^{-1}[A]): A\in Bor(K)\}=0$ for every $B\in Bor(\wh{K})$.
\end{enumerate}
\end{lemma}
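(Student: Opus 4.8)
The plan is to realise $\wh{K}$ as a ``curtain'' erected over $F$ and to place the two measures on it by hand. First I would replace $F$ by $F_0=\supp(\mu\uhr F)$, the topological support of $\mu$ restricted to $F$; since $\mu(F\sm F_0)=0$ we keep $\mu(F_0)=\mu(F)>0$, and now every point of $F_0$ lies in the support of $\mu\uhr F_0$. Throughout I assume $\mu$ nonatomic, as will be the case at every stage of the construction leading to \ref{main} (and as is in any case forced by (iv) together with strict positivity, an atom in $F_0$ demanding mass on a genuinely two--dimensional fibre). Then set
\[ \wh{K}=(K\times\{0\})\cup(F_0\times[0,1])\sub K\times[0,1],\]
with $\pi$ the first--coordinate projection. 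This $\wh{K}$ is closed, hence compact, and it is connected: $K\times\{0\}$ is connected, and every point $(k,t)$ of the curtain is joined to $(k,0)$ by the vertical segment $\{k\}\times[0,t]\sub\wh{K}$. For (i), note $\pi^{-1}[F]\supseteq F_0\times[0,1]$, and any $(k,t)$ with $k\in F_0$ and $0<t<1$ is interior to $\pi^{-1}[F]$, because a product neighbourhood whose second factor avoids $0$ meets $\wh{K}$ only inside the curtain; thus $\wn(\pi^{-1}[F])\supseteq F_0\times(0,1)\neq\emptyset$.

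For $\nu_1$ I would leave the mass of $\mu$ over $K\sm F_0$ on the floor and spread the mass over $F_0$ uniformly up the curtain,
\[ \nu_1=\mu\uhr(K\sm F_0)\ +\ (\mu\uhr F_0)\otimes\lambda,\]
where $\lambda$ is Lebesgue measure on $[0,1]$. A one--line computation gives $\pi[\nu_1]=\mu$ and total mass $1$, and strict positivity is clear since $\mu\uhr F_0$ has full support in $F_0$ and $\lambda$ has full support in $[0,1]$. Taking $B=F_0\times[0,\tfrac12]$ settles (iii): over each $k\in F_0$ the fibre of $\pi^{-1}[A]$ is either all of $[0,1]$ (if $k\in A$) or empty, so its symmetric difference with $[0,\tfrac12]$ always has $\lambda$--measure $\tfrac12$; integrating, $\nu_1(B\btu\pi^{-1}[A])\ge\tfrac12\mu(F_0)=\tfrac12\mu(F)>0$ for every $A$.

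For $\nu_2$ I would instead collapse the curtain onto the graph of a single Borel section. Pick a Borel $\theta\colon K\to[0,1]$ with $\theta=0$ off $F_0$, put $s(k)=(k,\theta(k))\in\wh{K}$, and set $\nu_2=s[\mu]$. Then $\pi\circ s=\mathrm{id}_K$ gives $\pi[\nu_2]=\mu$ at once, and for any Borel $B\sub\wh{K}$ one has $\nu_2(B\btu\pi^{-1}[A])=\mu(s^{-1}[B]\btu A)$, which vanishes for $A=s^{-1}[B]$; this is exactly (iv). The whole content of $\nu_2$ is therefore to choose $\theta$ so that $s[\mu]$ is \emph{strictly positive}, i.e.\ so that the graph of $s$ is $\mu$--dense in the curtain: for every basic open $U\sub K$ with $\mu(U\cap F_0)>0$ and every interval $I\sub[0,1]$ we need $\mu\{k\in U\cap F_0:\theta(k)\in I\}>0$.

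Constructing such a $\theta$ is the main obstacle, and it is where nonatomicity enters. I would write $\theta=\sum_{n}2^{-n}\mathbf 1_{B_n}$ and build the bit--sets $B_n$ recursively so that every bit splits every relevant piece: given the finite algebra generated by $B_1,\dots,B_{n-1}$, choose $B_n$ so that $0<\mu(B_n\cap P\cap U_m)<\mu(P\cap U_m)$ for every atom $P$ and every basic open $U_m$ with $\mu(P\cap U_m)>0$. A short induction then shows that inside each $U_m$ every dyadic cylinder $\{B_1=\sigma_1,\dots,B_l=\sigma_l\}$ has positive measure, which is the density property above. The existence of a single $B_n$ doing this rests on the standard fact that a nonatomic measure admits, for any countable family of positive--measure sets, a Borel set meeting each of them in positive measure and positive co--measure; crucially this fact treats \emph{countably} many sets simultaneously, which is precisely what removes any ``already committed'' obstruction coming from the earlier bits. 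I expect the verification of (i)--(iv) to be routine once $\wh{K}$, $\nu_1$, $\nu_2$ and $\theta$ are in place; the delicate point is solely the simultaneous, scale--by--scale control needed to make the section's graph dense.
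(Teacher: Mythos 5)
Your construction of $\wh{K}$, of $\nu_1$, and your verification of (i)--(iii) coincide with the paper's (same curtain $(K\times\{0\})\cup(F_0\times[0,1])$, same product measure, same witness $B$ up to reflection). For $\nu_2$ both you and the paper push $\mu$ forward along a Borel section of $\pi$ whose graph is measure--dense in the curtain, and both get (iv) by taking $A$ to be the preimage of $B$ under the section; but the way the section is produced is genuinely different. The paper enumerates pairs $(U_n,q_n)$ of basic open sets and rationals and builds pairwise disjoint \emph{closed nowhere dense} sets $L_n\sub F_0$ with $\mu(L_n\cap U_n)>0$, sending $L_n$ to height $q_n$; there the recursion is free of charge, because removing finitely many closed nowhere dense sets from a nonempty open subset of the support leaves a nonempty open, hence nonnegligible, set. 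Your dyadic section $\theta=\sum_n 2^{-n}\mathbf 1_{B_n}$ replaces this topological bookkeeping by a purely measure--theoretic one, and its whole weight rests on the ``standard fact'' that in a nonatomic measure space one Borel set can simultaneously split countably many prescribed nonnegligible sets. That fact is true, but since it is the only nontrivial ingredient you do not prove, let me record why: take a stochastically independent sequence $\langle d_m\rangle$ of sets of measure $\tfrac12$ (available by nonatomicity); if no $d_m$ split all the given sets $E_i$, then each $d_m$ would contain or be disjoint from some $E_{i(m)}$ modulo null sets, the pigeonhole principle would give one $E_{i^*}$ contained in infinitely many of the $d_m^{\pm}$, and independence would force $\mu(E_{i^*})=0$, a contradiction. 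Granting this, your induction does yield positive measure for every dyadic cylinder inside every relevant basic open set, and strict positivity of $s[\mu]$ follows. Finally, your explicit nonatomicity hypothesis is a sensible repair rather than a loss of generality: clause (iv) genuinely fails if $\mu\uhr F$ has an atom isolated in $F_0$ (the fibre over it must carry spread--out mass that no $\pi^{-1}[A]$ can approximate), the paper's own $L_n$--recursion tacitly needs the same assumption, and nonatomicity does hold at every stage of Construction \ref{construction}. In short: correct, same skeleton, with a more measure--theoretic and slightly more demanding route to $\nu_2$ where the paper lets the topology of the support do the work.
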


\begin{proof}
Let $F_0$ be the support of $\mu$ restricted to $F$, that is
\[ F_0=F\sm\bigcup\{U: U\mbox{ open and } \mu(F\cap U)=0\}.\]
Let $\wh{K}=\{(x,t)\in K\times [0,1]: x\in F_0\mbox{ or } t=0\}$. Then
$\wh{K}$ is clearly a compact connected subspace of $K\times [0,1]$.
Moreover, the set $\pi^{-1}[F]$ contains $F_0\times [0,1]$, a set with non-empty interior so $(i)$ is granted.

We define the required measure $\nu_1$ on $\wh{K}$  by setting
\[\nu_1(B)= \mu\otimes\lambda (B),\]
for Borel sets $B\sub F_0\times [0,1]$, where $\lambda$ is the Lebesgue measure on $[0,1]$.
On the remaining part $(K\sm F_0)\times\{0\}$ we just copy the measure $\mu$ from $K\sm F_0$.
It is easy to verify $\pi[\nu_1]=\mu$ and that $(iii)$ holds for $B=F_0\times [1/2,1]$.

To define $\nu_2$ we choose a countable base $\UU$ of the space $F_0$ and fix an enumeration
$\{(U_n,q_n): n\in\omega\}$ of all pairs $(U,q)$ where $\emptyset\neq U\in\UU$ and $q\in(0,1)\cap\qu$.
Then we inductively construct a pairwise disjoint sequence of closed sets $L_n\sub F_0$ such that
$\mu(L_n\cap U_n)>0$ for every $n$. Note that it is easy to carry out such a construction under the
inductive assumption that ever $L_n$ has empty interior in the space $F_0$.

Now we take a function $h:F_0\to \wh{K}$ defined  by $h(x)=(x,q_n)$ for $x\in L_n$ (and $h(x)=(x,0)$ otherwise).
Set $\nu_2$ to be the obvious copy of $\mu$ outside $F_0\times [0,1]$ and put
$\nu_2(B)=\mu(h^{-1}[B])$ for $B\sub F_0\times [0,1]$. Then $\nu_2$ is strictly positive (by the way $h$ is defined)
and $(iv)$ holds since we can take $A=h^{-1}[B]$ there.
\end{proof}

\begin{remark}\label{r1}
We shall later call on the following additional property of $\pi:\wh{K}\to K$ defined as in the proof above.
Namely, $\pi$ is monotone, that is $\pi^{-1}[F]$ is connected for every connected $F\sub K$.

We shall also refer to the following. Take a continuous function $r$ on $[0,1]$ with $r(0)=0=\int_0^1 r(t)\; {\rm d}t$.
If we write $p$ for the projection $\wh{K}\to [0,1]$ onto the second coordinate then for any continuous function $f$ on $K$
we have
\[ \int_{\wh{K}} (r\circ p) \cdot (f\circ\pi) \; {\rm d}\nu_1=\int_{F_0 \times [0,1]} (r\circ p) \cdot (f\circ\pi) \; {\rm d}\mu\otimes\lambda=0,\]
 by the Fubini theorem.
\end{remark}

\begin{construction}\label{construction}
Let $K_0=[0,1]$ and let $\mu_0$ be the Lebesgue measure on $[0,1]$. Construct an inverse system of compact metrizable
spaces with measures
as in \ref{1:2}, with the following bookkeeping.

Given the space $K_\alpha$ with the measure $\mu_\alpha$,
using $\cf(\NN)=\omega_1$ fix an enumeration $\{N^\alpha_\xi: \xi<\omega_1\}$ of
family of $\nu_\alpha$-null subsets of $K_\alpha$ that is   cofinal the corresponding null ideal.
Moreover, using Theorem \ref{ckp}, fix a family $\{F^\alpha_\xi: \xi<\omega_1\}$ of closed subsets
of $K_\alpha$ such that $\mu_\alpha(F^\alpha_\xi)>0$ for every $\xi$ and every $B\in Bor(K_\alpha)$
of positive measure contains $F^\alpha_\xi$ for some $\xi<\omega_1$.

Fix also a bijection  $\phi: \omega_1 \times\omega_1\to \omega_1$ such that $\phi(\beta,\xi)=\alpha$ implies
$\beta\le\alpha$ --- $\vf$  will tell us that at step $\alpha$ we should take care of the set $F^\beta_\xi$, where
$\phi(\beta,\xi)=\alpha$.

There is nothing to do at limit step $\gamma$, we simply define $K_\gamma$ to be the inverse limit of the spaces
defined so far. Given $K_\alpha$ and the measure $\mu_\alpha$, we construct $K_{\alpha+1}$ as follows.
The set
\[ N=\bigcup_{\xi,\beta<\alpha} (\pi^\alpha_\beta)^{-1}[N^\beta_\xi],\]
is a countable union of $\mu_\alpha$-null sets so $\mu_\alpha(N)=0$.
For $\beta$ and $\xi$ such that  $\phi(\beta,\xi)= \alpha$, we consider the set
\[ H=(\pi^\alpha_\beta)^{-1}[F^\beta_\xi],\]
which satisfies $\mu_\alpha(H)>0$ so there is  a closed set $F\sub H\sm N$ with $\mu_\alpha(F)>0$.
Using Lemma \ref{basic} for such $F\sub K_\alpha$ we put $K_{\alpha+1}=\wh{K_\alpha}$ and  $\mu_{\alpha+1}=\nu_1$.
\end{construction}

\begin{proof}(of Theorem \ref{main})
We shall prove that  the limit $K$ of the inverse system as in \ref{construction} equipped with
the measure $\mu$ of type $\omega_1$ and both are as required.
We already know that  $\mu$  is strictly positive on $K$.

To prove that $\mu$ is a normal measure it is sufficient, by Lemma \ref{1:1},
to check that $\mu(Z)>0$ implies that $Z$ has nonempty interior for every zero set $Z$. But such a set $Z$
is of the form $Z=\pi_\beta^{-1}[Z']$ for some $\beta<\omega_1$ and a closed set $Z'\sub K_\beta$ (see  Lemma \ref{2:is2}).
If $\mu(Z)>0$ then $\mu_\beta(Z')>0$ so there is $\xi<\omega_1$ such that $F^\beta_\xi \sub Z'$.
Then at step $\alpha=\vf (\beta,\xi)$ we took care to
assure that the set
\[  (\pi_\alpha^{\alpha+1})^{-1}[(\pi^{\alpha}_\beta))^{-1} F^\beta_\xi]=(\pi^{\alpha+1}_\beta)^{-1}[F^\beta_\xi]\]
 gets nonempty interior. Since
 $\pi_\beta=\pi^{\alpha+1}_\beta\circ \pi_{\alpha+1}$ and $Z=\pi_\beta^{-1}[Z']$,
 it follows that $Z$ has nonempty interior too.

Once we know that $\mu$ is a normal
strictly positive measure on $K$ (clearly, $\mu$ vanishes on singletons), we conclude that $K$ is an $L$-space using
the argument from the introduction. Recall that this means that every closed subset of $K$ is a zero set.

Take a Borel set $B\sub K$ such that $\mu(B)=0$; then $\mu(\overline{B})=0$ (by normallity of $\mu$)
and $Z=\overline{B}\in \ZZ(K)$.
It follows that   $Z=\pi_\beta^{-1}[Z']$ for some $\beta$  and $Z'\sub N^\beta_\xi$ for some $\xi<\omega$.
 The set  $Z'$ remains unsplit from step $\alpha=\vf(\beta,\xi)$  so $\pi_\alpha$ is one-to-one on $Z$, i.e.\
  $Z$ is homeomorphic to the metric compactum $(\pi^\alpha_\beta)^{-1}[Z']$.

To see that  the space $K$ is Corson compact note that  if $x\in K$ then $t=\pi_0(x)\in [0,1]$ belongs to some
$N^0_\xi$ and
 it follows that all  coordinates of $x$ above $\vf(0,\xi)$ are zero.

The fact that we used the measure $\nu_1$ from Lemma \ref{basic} in the construction implies that
$\mu$ is indeed of type $\omega_1$, as \ref{basic}(iii) implies that no countable family
of Borel subsets of $K$ can form a $\btu$-dense family in $Bor(K)$.

To get another space $K$ supporting  a normal measure of countable type
 we carry out the whole of \ref{construction}
replacing $\nu_1$ by  $\nu_2$ at each successor step when referring to Lemma \ref{basic}.
Then Lemma \ref{basic}(iv)  guarantees that the resulting measure $\mu$  has type $\omega$ -- its measure algebra is be the same
as that of the Lebesgue measure.
\end{proof}

We can now point out an  interesting property  of our connected version of Kunen's $L$-space.

\begin{theorem}\label{prop}
Let $K$ be the compact space resulting from Construction \ref{construction}.
If a compact zero-dimensional space $Y$ is a continuous image of a closed subspace $Z$ of $K$ then
$Y$ is metrizable.
\end{theorem}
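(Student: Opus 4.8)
The plan is to reduce metrizability of $Y$ to a statement about the clopen algebra of $Z$ and then to control that algebra through the inverse system. Since $Y$ is zero-dimensional, the surjection $g$ is constant on each connected component of $Z$, so it factors through the quotient $T=Z/{\sim}$ by connected components (equivalently quasi-components, as $Z$ is compact). Here $T$ is a zero-dimensional compactum and $Y$ is a continuous image of it; hence it suffices to prove that $T$ is metrizable, i.e.\ that $\clop(Z)\cong\clop(T)$ is countable. If $\mu(Z)=0$ then $Z$ is metrizable by \ref{main}(iii) and we are done immediately, so I would assume $\mu(Z)>0$.

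Next I would set up the inverse-limit picture. Writing $Z_\alpha=\pi_\alpha[Z]\sub K_\alpha$, each $Z_\alpha$ is a metrizable compactum, so its component space $T_\alpha=Z_\alpha/{\sim}$ is a zero-dimensional \emph{metrizable} compactum, and the bonding maps induce $T=\varprojlim_\alpha T_\alpha$. Thus $\clop(Z)$ is the increasing union of the (countable) algebras pulled back from the $T_\alpha$, and $T$ is metrizable precisely when only countably many levels contribute a genuinely new clopen set. Because every bonding map $\pi^{\alpha+1}_\alpha$ is monotone (Remark \ref{r1}), a new clopen partition of $Z_{\alpha+1}$ cannot separate points lying over the same point of $Z_\alpha$ unless $Z$ meets the interval fibre $\{x\}\times[0,1]$ attached over some $x\in F_0^\alpha$ in a \emph{disconnected} set; so every new clopen set originates from $Z$ cutting such a hair.

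The measure should confine these cuts. A cut carrying positive measure corresponds to a relatively clopen piece of $Z_{\alpha+1}$ of positive $\mu_{\alpha+1}=\nu_1$-measure that is separated, in the fibre direction, from the base, and the product form $\nu_1=\mu_\alpha\otimes\lambda$ on $F_0^\alpha\times[0,1]$ makes such pieces pairwise disjoint of positive measure; since $\mu$ is finite, positive-measure cuts can occur at only countably many levels. The remaining cuts have measure zero, hence lie in $\mu$-null, metrizable sets (by \ref{main}(iii)); here I would collect all the relatively clopen $\mu$-null subsets of $Z$ and use the hereditary Lindel\"ofness of $Z$ (a closed subspace of the $L$-space $K$) to see that their union is again $\mu$-null and metrizable, so contributes only countably many clopen sets to $T$. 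Combining the two, new clopen sets appear at only countably many levels, $\clop(Z)$ is countable, and $Y$ is metrizable.

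The hard part is exactly this last paragraph: making rigorous that new components arise \emph{only} from cut hairs, and that the measure-zero cuts do not secretly accumulate into uncountably many clopen sets outside the metrizable null part. This is where connectedness is indispensable, and where ccc together with the measure is provably insufficient: a strictly positive measure --- even one of countable type --- can live on a non-metrizable zero-dimensional ccc compactum (Suslin-type clopen algebras, or Kunen's original zero-dimensional $L$-space, which is its own non-metrizable zero-dimensional image). Thus the argument must draw on the monotone, connected structure of the inverse system and on the fibrewise product measure recorded in Remark \ref{r1}, rather than on any soft measure-algebra or cellularity estimate.
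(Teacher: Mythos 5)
There is a genuine gap, and it is precisely the step you yourself flag as ``the hard part.'' The ingredient you are missing is that $Z$ is not an arbitrary closed subset of an inverse limit: since $K$ is an $L$-space it is perfectly normal, so every closed $Z\subseteq K$ is a zero set, and by Lemma \ref{2:is2} it is therefore a \emph{full cylinder} $Z=\pi_\alpha^{-1}[Z']$ over a single countable level $\alpha$, with $Z'\subseteq K_\alpha$ closed and metrizable. Once you know this, your entire level-by-level analysis evaporates: $Z$ contains every fibre $\pi_\alpha^{-1}[\{t\}]$ for $t\in Z'$ in its entirety, each such fibre is connected (it is an inverse limit of continua, because every bonding map is monotone by Remark \ref{r1}), so $g$ is constant on fibres, factors as $g=g'\circ\pi_\alpha$ through the metrizable compactum $Z'$, and $Y=g'[Z']$ is metrizable. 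This is the paper's proof, and it needs neither the component quotient $T$, nor the case split on $\mu(Z)$, nor any counting of measure-positive versus measure-null ``cuts.''

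Without the cylinder fact your programme does not close. Your reduction to controlling where $Z$ meets the attached fibres $\{x\}\times[0,1]$ in a disconnected set presupposes some grip on how an arbitrary closed $Z$ traces on those fibres, and the measure gives you none: the trace of $Z$ on a single hair can be any closed subset of an interval, these traces carry no $\mu$-mass individually, and nothing in the argument as written prevents uncountably many levels from each contributing a new clopen set supported on a null cut, with the resulting clopen sets failing to assemble into a single metrizable null piece (your appeal to hereditary Lindel\"ofness controls the \emph{union} of the null clopen pieces you have already found, not the totality of clopen subsets of $Z$ that might arise this way). So the proposal is not a complete proof and, more importantly, it is aimed at a harder problem than the one actually posed: the decisive structural fact --- closed subsets of a perfectly normal inverse limit are determined at a countable stage --- is exactly what you should reach for first. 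Your peripheral remarks (factoring $g$ through the component quotient, the observation that ccc plus a strictly positive measure cannot suffice) are correct but do not substitute for this step.
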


\begin{proof}
Let $g:Z\to Y$ be a continuous surjection from a closed set $Z\sub K$ onto a zero-dimensional space $Y$.
Since $K$ is perfectly normal, $Z$ is a zero set in $K$ so
$Z=\pi_\alpha^{-1}[Z']$ for some $\alpha<\omega_1$ and a closed set $Z'\sub K_\alpha$.
Using Remark \ref{r1} we note the following.
\medskip

\noindent {\sc Claim.} The set $\pi_\alpha^{-1}[\{t\}]$ is connected for every $t\in Z'$.
\medskip

As $Y$ is zero-dimensional, Claim means that $g$ is constant on every coset $\pi_\alpha^{-1}[\{t\}]$. Thus, we can
define a mapping $g':Z'\to Y$ by $g'(t)=g(x)$ where $\pi_\alpha(x)=t$. Then $g=g'\circ\pi_\alpha$ and this
implies that $g'$ is a continuous surjection from $Z'$ onto $Y$. As $Z'$ is compact metrizable, so is $Y$.
\end{proof}

Note that Theorem \ref{prop} says in particular that the space $K$ in question contains no
nonmetrizable zero-dimensional subspaces.
Recall that such a  feature is somewhat delicate; only quite recently Koszmider \cite{Ko16}
constructed  a ZFC example of a nonmetrizable compact space having that property.
Assuming the existence of a Lusin set, Marciszewski \cite{Ma20} gives another example
of that kind which is an Eberlein compact space.

\section{On the Banach space $C(K)$}

Given a compact space $K$, $C(K)$ denotes the Banach space of continuous real-valued functions on $K$
with the usual supremum norm. Recall that if two such Banach spaces $C(K)$ and $C(L)$ are isometric then
by the classical Banach-Stone theorem the underlying compacta $K$ and $L$ must be homeomorphic.
The situation changes dramatically if we ask for which pairs $K$ and $L$ the Banach spaces $C(K)$ and $C(L)$ are merely isomorphic, that this there is a linear surjection $T: C(K)\to C(L)$ such that
$m\cdot \|g\|\le \| Tg\|\le M\cdot \| g\|$ for some $m,M>0$ and every $g\in C(K)$; see
\cite{Pl15i} for further information and basic references to the subject.

By the classical Miljutin theorem, $C([0,1])$ and $C(2^\omega)$ are isomorphic as Banach spaces.
It was  along standing problem if for every compact space $K$ there is a zero-dimensional compact space $L$
such that $C(K)$ and $C(L)$ are isomorphic as Banach spaces. The first counterexample was obtained by
Koszmider \cite{Ko04} who used his involved technology of producing Banach spaces of continuous functions with `few operators'.
Another counterexample was presented by Avil\'es and Koszmider \cite{AK13}, as a by-product of their solution to
Namioka's problem on the class of Radon-Nikodym compacta.
We show below that our connected version of Kunen's $L$-space provides the third example of that kind.

Theorem \ref{final} given below will be derived from  our main result from \cite{Pl15}. Recall that by a $\pi$-base of a topological space
we mean a family $\VV$ of nonempty open sets having the property that every nonempty open set contains
some $V\in\VV$.

\begin{theorem}\label{pl15}
Let $K$ and $L$ be compact spaces such that $C(K)$ is isomorphic to $C(L)$.
Then $K$ has a $\pi$-base $\VV$ such that for every $V\in\VV$, $\overline{V}$ is a continuous image of some compact subspace
of $L$.
\end{theorem}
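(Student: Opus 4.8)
The plan is to read off the required covering of $K$ from the adjoint of the isomorphism, using that the images of the Dirac measures form a weak$^*$-continuous family that norms $C(K)$.

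\emph{Setup.} Fix an isomorphism $T\colon C(K)\to C(L)$ and put $\lambda=\|T\|\,\|T^{-1}\|$. Write $M(K)=C(K)^*$ and $M(L)=C(L)^*$ for the spaces of signed Radon measures, and for $y\in L$ set $\mu_y=T^*\delta_y\in M(K)$, so that $Tf(y)=\mu_y(f)$ for every $f\in C(K)$. The assignment $y\mapsto\mu_y$ is weak$^*$-continuous, $\|\mu_y\|\le\|T\|$, and since $\|f\|\le\|T^{-1}\|\,\|Tf\|=\|T^{-1}\|\sup_{y}|\mu_y(f)|$ the family $\{\mu_y:y\in L\}$ norms $C(K)$ with constant $\|T^{-1}\|$. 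Taking $f$ supported in a nonempty open $U\sub K$ gives in particular
\[\sup_{y\in L}|\mu_y|(\overline U)\ \ge\ 1/\|T^{-1}\|\qquad\text{for every nonempty open }U\sub K,\]
so every nonempty open set carries a uniformly large amount of the variation of some $\mu_y$.

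\emph{A compact covering relation.} Let $G=\{(y,x)\in L\times K: x\in\supp\mu_y\}$ and $R=\overline{G}$, a compact subset of $L\times K$; write $p\colon R\to L$ and $q\colon R\to K$ for the two projections. If some nonempty open $W\sub K$ missed every $\supp\mu_y$, then each $f$ supported in $W$ would satisfy $Tf=0$, contradicting injectivity of $T$; hence $\bigcup_y\supp\mu_y$ is dense in $K$ and, $q(R)$ being compact, $q\colon R\to K$ is a continuous surjection. Passing to a closed $R_0\sub R$ with $q|_{R_0}$ irreducible, we may further assume that for every nonempty relatively open $W\sub R_0$ the set $K\sm q(R_0\sm W)$ is nonempty open with $q$-preimage inside $W$; thus $q$ has one-point fibres over a $\pi$-base of $K$. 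So $K$ is already a continuous image of the compact $R_0$, but $R_0$ lies in $L\times K$, not in $L$. \emph{The whole problem is to replace $R_0$ by a subspace of $L$}, and this is exactly where the passage to a $\pi$-base is forced, since globally the $L$-coordinate of $R_0$ may be genuinely multivalued over $K$.

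\emph{Localisation and the main obstacle.} Fix a nonempty open $U\sub K$; by irreducibility choose a nonempty open $U_0\sub U$ over which $q|_{R_0}$ has one-point fibres, and pick $x_0\in U_0$. The mechanism for building the map is concentration: by the displayed inequality, for each nonempty open $V$ there is $y_V$ with $|\mu_{y_V}|(\overline V)\ge 1/(2\|T^{-1}\|)$, and letting $V$ shrink to a point while passing to a weak$^*$-cluster point of the $\mu_{y_V}$ along a convergent subnet of the $y_V$ produces measures whose variation concentrates near that point. The plan is to carry out this selection continuously in the base point over a sufficiently small $\overline V\sub U_0$, obtaining a continuous $s\colon\overline V\to L$ with $(s(x),x)\in R_0$ for all $x$; granting that $s$ is \emph{injective}, the compact set $Z=s(\overline V)\sub L$ satisfies $\overline V=q\circ(s)^{-1}(Z)$ and $\overline V$ is a continuous (indeed homeomorphic) image of $Z$. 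The resulting $V$'s form a $\pi$-base, which is the assertion. The one nonformal point --- and the step I expect to be the main obstacle --- is precisely arranging this selection to be continuous and its $L$-coordinate single-valued, i.e.\ that the measures $\mu_{s(x)}$ behave like point masses at $x$ uniformly in $x\in\overline V$. It is the freedom to shrink $V$ inside the $\pi$-base, together with the one-point $q$-fibres (which let one recover the base point from the fibre) and the uniform lower bound $\sup_y|\mu_y|(\overline V)\ge 1/\|T^{-1}\|$ (which keeps the selection non-degenerate), that I expect to push this final step through.
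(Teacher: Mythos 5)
First, a remark on the comparison itself: the paper does not prove Theorem~\ref{pl15} --- it is quoted as the main result of \cite{Pl15i} --- so there is no internal argument to measure your proposal against. Judged on its own terms, your proposal has a structural defect that goes beyond the step you yourself flag as open, and it is the decisive one: your endgame is to produce a continuous \emph{injection} $s\colon\overline V\to L$ and to realise $\overline V$ as a homeomorphic copy of $Z=s(\overline V)$. That would show that $K$ has a $\pi$-base of sets whose closures \emph{embed} into $L$, which is strictly stronger than the theorem and is false: by Miljutin's theorem $C([0,1])$ and $C(2^\omega)$ are isomorphic, yet the closure of any nonempty open subset of $[0,1]$ contains a nondegenerate interval and so is homeomorphic to no subspace of the totally disconnected $2^\omega$. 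The theorem only asserts that $\overline V$ is a continuous image of some compact $Z\subseteq L$; the witnessing map must be allowed to go from $L$ to $K$ and to be badly non-injective (in the Miljutin example it is the standard surjection of the Cantor set onto the interval), and it cannot in general be obtained as the inverse of a selection $\overline V\to L$. This is exactly the situation exploited in Theorem~\ref{final}, where $K$ is connected and $L$ is zero-dimensional. An argument whose conclusion is an embedding cannot be repaired locally; the selection scheme has to be turned around.

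Two further points. The step ``$q|_{R_0}$ irreducible, hence one-point fibres over a $\pi$-base'' is a non sequitur for general compacta: irreducibility gives, for each nonempty relatively open $W\subseteq R_0$, a nonempty open subset of $K$ whose $q$-preimage lies in $W$, but collapsing a fibre to a point requires intersecting over a neighbourhood base of a point of $R_0$, which needs countability; the projection of the Gleason cover of $[0,1]$ onto $[0,1]$ is irreducible and has no singleton fibre over any interior point. And the step you call the ``main obstacle'' is in fact the entire content of the theorem, while the tools you set up do not reach it: $y\mapsto\mu_y$ is weak$^*$-continuous but $y\mapsto|\mu_y|$ is not, so sets of the form $\{y:|\mu_y|(\overline V)\ge\varepsilon\}$ need not be closed, and a weak$^*$ cluster point of the measures $\mu_{y_V}$ need not retain any mass near the base point. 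The actual proof in \cite{Pl15i} has to circumvent precisely these semicontinuity failures to manufacture an upper semicontinuous, compact-valued, eventually single-valued (hence continuous) surjection from a compact subset of $L$ onto $\overline V$. As it stands, the proposal is a plan with an acknowledged hole at its centre and a target that is too strong to be true.
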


At this point,  we have to refer to \ref{construction} once again to check the following.

\begin{lemma}\label{nonsep}
Let $K$ be the compact space resulting from Construction \ref{construction}. Then
for every sequence of probability measures $\nu_n$ on $K$ there
is a nonzero continuous function $f:K\to\er$ such that $\int_K f\; {\rm d}\nu_n=0$ for every $n$.
\end{lemma}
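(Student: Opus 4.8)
The plan is to look for $f$ of the very special form $f=r\circ p\circ\pi_{\alpha+1}$, where $\alpha<\omega_1$ is a successor ordinal to be chosen large, $p=p_{\alpha+1}\colon K_{\alpha+1}\to[0,1]$ is the projection onto the coordinate added at stage $\alpha+1$ (as in Remark \ref{r1}), and $r$ is one fixed function with $r(0)=0=\int_0^1 r\,\de t$ and $r\not\equiv 0$ --- say $r(t)=\sin 2\pi t$. Such an $f$ is automatically continuous, and since $F_0\times[0,1]\sub K_{\alpha+1}$ makes $p\circ\pi_{\alpha+1}$ surjective onto $[0,1]$, it is nonzero. Everything then reduces to one numerical claim: \emph{for every large enough successor $\alpha$ one has $\int_K f\,\de\nu_n=0$ for all $n$.}

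The key move is to test each $\nu_n$ against the distinguished normal measure $\mu$ and split $\nu_n=\nu_n^a+\nu_n^s$ into its $\mu$-absolutely continuous and $\mu$-singular parts. For the absolutely continuous part I would write $\nu_n^a=h_n\cdot\mu$ with $h_n\in L_1(\mu)$ and invoke Remark \ref{r1}: for $g\in C(K_\alpha)$ one has $\int_{K_{\alpha+1}}(r\circ p)(g\circ\pi^{\alpha+1}_\alpha)\,\de\mu_{\alpha+1}=0$, and lifting along $\pi_{\alpha+1}$ gives $\int_K (r\circ p\circ\pi_{\alpha+1})(g\circ\pi_\alpha)\,\de\mu=0$. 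Since $\{g\circ\pi_\alpha:g\in C(K_\alpha)\}$ is dense in $L_1(\FF_\alpha,\mu)$ (where $\FF_\alpha=\pi_\alpha^{-1}[Bor(K_\alpha)]$) and the functional $\phi\mapsto\int(r\circ p\circ\pi_{\alpha+1})\phi\,\de\mu$ is bounded on $L_1(\mu)$, it vanishes on all of $L_1(\FF_\alpha,\mu)$. Hence $\int f\,\de\nu_n^a=\int (r\circ p\circ\pi_{\alpha+1})h_n\,\de\mu=0$ as soon as $h_n$ is $\FF_\alpha$-measurable.

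This is where the length of the construction pays off. Because $\cf(\omega_1)=\omega_1$, the increasing union $\bigcup_{\alpha<\omega_1}\FF_\alpha$ is already a $\sigma$-algebra, and it contains every set $g^{-1}[U]$, $g\in C(K)$, which generate $Bor(K)$ by Lemma \ref{2:is2}; thus $\bigcup_\alpha\FF_\alpha=Bor(K)$. Consequently each density $h_n$ is $\FF_{\delta_n}$-measurable for some countable $\delta_n$, and for every successor $\alpha\ge\delta_n$ the contribution $\int f\,\de\nu_n^a$ vanishes. For the singular part I would use that $\nu_n^s$ is concentrated on a $\mu$-null Borel set; by normality of $\mu$ its closure $Z_n$ is again $\mu$-null with empty interior, hence metrizable by Theorem \ref{main}(iii). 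Running the ``unsplit'' bookkeeping of Construction \ref{construction} on $Z_n$ exactly as in the proof of Theorem \ref{main}, there is a countable $\gamma_n$ beyond which $Z_n$ is never split, so that the coordinate $p\circ\pi_{\alpha+1}$ is identically $0$ on $Z_n$ for every successor $\alpha>\gamma_n$; as $r(0)=0$ this forces $\int f\,\de\nu_n^s=0$.

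Putting the two parts together, $\int_K f\,\de\nu_n=0$ whenever $\alpha$ is a successor exceeding $\max(\delta_n,\gamma_n)$. Since this bound is countable for each $n$, the supremum $\eta=\sup_n\max(\delta_n,\gamma_n)$ is still a countable ordinal, and any successor $\alpha>\eta$ yields the required $f$. The main obstacle --- and the whole point of the argument --- is that an arbitrary $\nu_n$ may, like $\mu$ itself, be spread across all $\omega_1$ coordinates, so one cannot simply hunt for a single coordinate that all the $\nu_n$ ignore. Decomposing against $\mu$ is exactly what converts ``spread out'' into ``captured at a countable stage'': the absolutely continuous part through the reflection of $L_1$ densities to a countable level, and the singular part through the metrizability of $\mu$-null sets.
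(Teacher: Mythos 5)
Your proof is correct and follows essentially the same route as the paper: the same test functions $r\circ p\circ\pi_{\alpha+1}$, the same Lebesgue decomposition of each $\nu_n$ against $\mu$, Remark \ref{r1} for the absolutely continuous part, and the metrizable zero set $Z_n$ captured (and left unsplit) at a countable stage for the singular part. The only cosmetic difference is that for the absolutely continuous part the paper approximates the density by continuous functions, whereas you reflect it into $L_1(\FF_\alpha,\mu)$ via the $\sigma$-algebra argument; both reduce to the same independence computation.
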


\begin{proof}
Take any nonzero function $r:[0,1]\to [0,1]$ such that
$r(0)=0$ and $\int_0^1 r(t)\; {\rm d} t=0$ and
consider the family of functions $g_\alpha=r\circ p_\alpha\circ \pi_{\alpha+1}$ on $K$,
where, for $\alpha<\omega_1$, $p_\alpha:K_{\alpha+1}\to [0,1]$ denotes the restriction of the projection onto the second coordinate.
It is enough to check that for every measure $\nu$ on $K$ we have $\int_K g_\alpha\; {\rm d}\nu=0$
for all but countably many $\alpha<\omega_1$.

If $\nu$ is singular  with respect to $\mu$
then $\nu$ is concentrated on a metrizable closed  set $Z\sub K$.
Since $K$ is perfectly normal, $Z$ is a zero set in $K$ so
for some $\beta<\omega_1$ we have $Z=\pi_\beta^{-1}[Z']$ and
$Z'\sub N^\beta_\xi$ for some $\xi<\omega_1$.
It follows that for any $\alpha\ge \vf(\beta,\xi)$ we have $g_\alpha=0$ $\nu$-almost everywhere so
$\int_K g_\alpha\; {\rm d}\nu=0$.

On the other hand, if $\nu$ is a measure on $K$ that is absolutely continuous with respect to $\mu$ then
 $\nu(\cdot)=\int_{(\cdot)} h\; {\rm d}\mu$ for some measurable  $h:K\to\er$. Then there is a sequence of
 continuous functions $f_j:K\to\er$  converging to $h$ $\mu$-almost everywhere.
Hence, in such a case, it  remains to check that $\int g_\alpha\cdot f\; {\rm d}\mu=0$ for any continuous function $f$ and
all but countably many $\alpha$.
This follows from the fact that we have $f=f'\circ\pi_\beta$ for some $\beta<\omega$
and then $\int g_\alpha\cdot f\; {\rm d}\mu=0$ for any $\alpha\ge \beta$ by stochastic independence of
the function $f$ and $g_\alpha$, see Remark \ref{r1}.

The general case follows, as every measure $\nu$ on $K$ can be decomposed as $\nu=\nu'+\nu''$,
where $\nu'$ is singular and $\nu''$ is absolutely continuous with respect to $\mu$.
\end{proof}

We are now ready for the main result of this section.

\begin{theorem}\label{final}
Assuming $\cf(\NN)=\omega_1$, there is a Corson compact space $K$ such that
the Banach space $C(K)$ is isomorphic to no space of the form $C(L)$ with $L$ zero-dimensional.
\end{theorem}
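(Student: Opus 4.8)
\emph{The plan} is to argue by contradiction, using Theorem~\ref{pl15} to transport the zero-dimensionality of $L$ into local structure on $K$, and then to contradict Lemma~\ref{nonsep} by manufacturing a \emph{countable} family of measures on $K$ to which no nonzero continuous function can be orthogonal. I would take $K$ (and $\mu$) to be the Corson compactum from Construction~\ref{construction} for which $\mu$ has \emph{countable} Maharam type; this is the crucial choice, since the whole argument rests on the measure algebra of $\mu$ being separable. So suppose, towards a contradiction, that $C(K)$ is isomorphic to $C(L)$ with $L$ zero-dimensional. By Theorem~\ref{pl15} there is a $\pi$-base $\VV$ such that for each $V\in\VV$ the set $\ol V$ is a continuous image, say $\ol V=\phi_V[Z_V]$, of a compact $Z_V\sub L$; as $L$ is zero-dimensional, every $Z_V$ is zero-dimensional.

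Since $K$ is ccc (being hereditarily Lindel\"of), a maximal disjoint subfamily $\{V_n:n\in\en\}\sub\VV$ is countable and has dense union. For each $n$ I would fix a lift $\lambda_n$ of $\mu\uhr\ol{V_n}$ to $Z_n:=Z_{V_n}$, that is, a Radon probability measure on $Z_n$ with $\phi_n[\lambda_n]=\mu\uhr\ol{V_n}$; such a lift exists because $\phi_n^*$ embeds $C(\ol{V_n})$ isometrically and positively into $C(Z_n)$, so the state $\mu\uhr\ol{V_n}$ extends to a state on $C(Z_n)$. Because $\mu$ has countable type, the measure algebra of $\mu\uhr\ol{V_n}$ is separable; fix Borel sets $\{B_{n,k}:k\in\en\}$ dense in it. Here zero-dimensionality of $Z_n$ enters decisively: clopen subsets of $Z_n$ are dense in $(Bor(Z_n),\lambda_n)$ for the Fr\'echet--Nikodym metric, so I can pick clopen $C_{n,k,m}\sub Z_n$ with $\lambda_n\bigl(C_{n,k,m}\btu\phi_n^{-1}[B_{n,k}]\bigr)\to 0$ as $m\to\infty$. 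Define (discarding the null ones and normalising) probability measures $\nu_{n,k,m}=\phi_n[\lambda_n\uhr C_{n,k,m}]$ on $K$, a countable family concentrated on $\bigcup_n\ol{V_n}$.

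Now apply Lemma~\ref{nonsep} to $\{\nu_{n,k,m}\}$: there is a nonzero $f\in C(K)$ with $\int_K f\,\de\nu_{n,k,m}=0$ for all $n,k,m$. Unwinding the definitions, $\int_{Z_n}(f\circ\phi_n)\cdot\mathbf 1_{C_{n,k,m}}\,\de\lambda_n=0$; letting $m\to\infty$ and using that $f$ is bounded yields $\int_{\ol{V_n}}f\cdot\mathbf 1_{B_{n,k}}\,\de\mu=0$ for every $k$. As $\{B_{n,k}\}_k$ is dense in the measure algebra of $\mu\uhr\ol{V_n}$, it follows that $\int_A f\,\de\mu=0$ for every Borel $A\sub\ol{V_n}$, so $f=0$ $\mu$-a.e.\ on $\ol{V_n}$. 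Since this holds for every $n$ and $\bigcup_n V_n$ is dense, the open set $\{f\neq 0\}$ is $\mu$-null, hence empty by strict positivity of $\mu$; that is, $f\equiv 0$, a contradiction.

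The step I expect to be the real crux — and where the apparent vacuity of the hypothesis dissolves — is the passage from ``$\ol V$ is a continuous image of \emph{some} zero-dimensional compactum'' to an actual restriction. Taken alone this says nothing, since \emph{every} compactum is such an image; what makes it bite is the simultaneous interaction of three facts: that $Z_V$ is zero-dimensional (so clopen sets recover its Borel algebra in measure), that $\mu$ has countable Maharam type (so \emph{countably many} clopen sets already suffice), and that Lemma~\ref{nonsep} forbids countable total families. I would therefore spend the most care verifying that the chosen $C_{n,k,m}$ genuinely recover $f\circ\phi_n$ in $L_2(\lambda_n)$ — that is, that $f$, being a function of $\phi_n$ on $\ol{V_n}$, is orthogonal to the \emph{pullback} algebra and so vanishes $\lambda_n$-a.e.; the existence and positivity of the lifts $\lambda_n$ and the reduction to countably many $V_n$ are routine by comparison.
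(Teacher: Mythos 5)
There is a genuine gap, and it is fatal: you apply Lemma \ref{nonsep} to the variant of Construction \ref{construction} in which $\mu$ has \emph{countable} Maharam type, but Lemma \ref{nonsep} is proved only for the variant built with the measures $\nu_1$, i.e.\ the one where $\mu$ has type $\omega_1$. The proof of that lemma rests on the Fubini computation of Remark \ref{r1}, which uses the product structure of $\nu_1$ on $F_0\times[0,1]$ and fails for $\nu_2$ (which lives on the graph of the function $h$). Worse, the lemma is not merely unproved but \emph{false} for the countable-type space: the paper notes after Theorem \ref{final} that in that case the dual unit ball of $C(K)^*$ is weak$^*$ separable, so a countable total family of measures on $K$ exists --- indeed your own construction of the $\nu_{n,k,m}$ essentially exhibits such a family, which is why no contradiction is forthcoming. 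The two ingredients of your argument are incompatible by design: the clopen-approximation step needs the measure algebra of $\mu\uhr\ol{V_n}$ to be separable (countable type), while Lemma \ref{nonsep} needs exactly the non-separability that $\nu_1$ injects at every successor step via Lemma \ref{basic}(iii). Note also that the paper explicitly leaves open whether the countable-type space satisfies the conclusion of Theorem \ref{final}, so a correct proof along your lines would settle a question the author could not.

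The paper's route is different and avoids measure algebras entirely. It applies Theorem \ref{pl15} in the other direction (to $L$, using symmetry of isomorphism) to obtain a $\pi$-base of $L$ whose members have closures that are continuous images of closed subspaces of $K$; Theorem \ref{prop} --- which you never invoke, and which is the actual device for making zero-dimensionality of $L$ ``bite'' --- then forces those closures to be metrizable. Combined with the fact that $L$ is ccc (transferred from $K$ via Rosenthal's theorem), this makes $L$ separable, and a countable dense set $\{y_n\}$ in $L$ yields, through the isomorphism $T$, a countable family of measures $f\mapsto Tf(y_n)$ separating points of $C(K)$, contradicting Lemma \ref{nonsep} for the \emph{uncountable-type} space. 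If you want to repair your proof, switch to the type-$\omega_1$ space and replace the measure-algebra argument with an appeal to Theorem \ref{prop}.
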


\begin{proof}
We shall prove that the space from Theorem \ref{main} which is an inverse limit of the system
\ref{construction} supporting the measure $\mu$ of uncountable type is
as required.

Suppose  that $L$ is a compact totally disconnected space such that $C(L)$ is isomorphic to $C(K)$.
Our space $K$ is $ccc$ (as it supports a finite measure).
Rosenthal's result from \cite{Ro69} states that this is reflected by the isomorphic structure of $C(K)$,
that property of $K$  is equivalent to saying that every weakly compact subset
of $C(K)$ is separable. It follows that $L$ must be $ccc$ too.

By Theorem \ref{pl15}, $L$ has a $\pi$-base consisting of open sets $V$  such that
$\overline{V}$ is a continuous image of a closed subspace of $K$. By Theorem \ref{prop},
such a set $\overline{V}$ is then metrizable so, in particular, $V$ is separable. It follows that $L$
is a $ccc$ space having a $\pi$-base made of open separable subspaces and this implies
that $L$ is separable itself.

On the other hand, separability of $L$ is in contradiction with Lemma \ref{nonsep}. Indeed, 
if $\{y_n: n<\omega\}$ is a dense subset of $L$
and $T: C(K)\to C(L)$ is an isomorphism then, for every $n$,
\[ C(K)\ni f\to Tf(y_n),\]
 is a continuous functional on $C(K)$ which is represented by
a signed measure $\nu_n$ on $K$. Writing  every $\nu_n$ as $\nu_n=\nu_n^+-\nu_n^-$ (as a difference of two nonegative measures), we get a countable family $\{ \nu_n^+,\nu_n^-: n<\omega\}$
of nonnegative measures separating elements of $C(K)$.
\end{proof}

One can check that if we let $K$ to be connected $L$-space supporting a measure $\mu$ of countable type
then $C(K)^\ast$ has a $weak^\ast$ separable dual unit ball; in particular,
Lemma \ref{nonsep} does not hold. We do not know if $C(K)$ satisfies the assertion of Theorem \ref{final}.
Let us recall that Talagrand's construction mentioned in the introductory section gives, under CH, a tricky space $K$ for which
$C(K)^\ast$ is $weak^\ast$ separable while the dual unit ball in $C(K)^\ast$ is not.
Such an example was later constructed in the usual set theory, see \cite{APR14}.

\end{document}